\newtheorem{theorem}{Theorem}
\newtheorem{proposition}[theorem]{Proposition}
\theoremstyle{definition}
\theoremstyle{remark}
\numberwithin{equation}{section}
\begin{document}

\def\sax{\sigma^x_a}

\def\suma{\sum_a}

\def\sumb{\sum_b}

\def\teax{\tilde{E}^x_a}

\def\tfby{\tilde{F}^y_b}

\def\eax{{E}^x_a}

\def\fby{{F}^y_b}

\def\eaxh{({E}^x_a)^{1/2}}

\def\fil{\Phi}

\def\fis{\Phi^*}

\def\bh{B(H)}

\def\c1{C_1(H)}

\def\eaxp{\{E^x_a\}\sb{a,x}}

\def\fbyp{\{F^y_b\}\sb{b,y}}

\def\teaxp{\{\tilde{E}^x_a\}\sb{a,x}}

\def\tfbyp{\{\tilde{F}^y_b\}\sb{b,y}}

\def\pabxy{p(a,b|x,y)}

\title[Tsirelson's problem]{Tsirelson's problem\\
 and purely atomic\\
  von Neumann algebras}
\author{Bebe Prunaru}

\address{Institute of Mathematics ``Simion Stoilow'' 
of the Romanian
Academy\\
P.O. Box 1-764
 RO-014700 Bucharest 
Romania}
\email{bebe.prunaru@gmail.com}

\begin{abstract}

It is shown that if a bipartite behavior admits a field representation in which 
Alice (or Bob's) observable algebra generates a purely atomic von Neumann algebra 
then it is non-relativistic.

\end{abstract}

\maketitle

Let $H$ be a separable complex Hilbert space, and let 
$\bh$ be the algebra of all bounded linear operators on $H$.
If $S\subset\bh$ then $span(S)$ denotes its linear span
 and $comm(S)$ its commutant.

Let $\c1$ be the space of all trace-class operators on $H$.
We denote 

$$<\mu,T>=tr(\mu T) \quad  \mu\in\c1, T\in\bh.$$

If $\Psi:\bh\to\bh$ is a weak star continuous  map, then
we shall denote by $\Psi^*:\c1\to\c1$ its predual map, hence
$$<\Psi^*(\mu),T>=<\mu,\Psi(T)> \quad \mu\in\c1, T\in\bh.$$

In what follows $A$ and $B$ are finite  sets. 
Moreover $\{A_x\}\sb{x\in A}$ and $\{B_y\}\sb{y\in B}$ are families of finite sets.
Elements of $A\sb x$ are identified by  pairs of the form $(a,x)$ and similarly for $B\sb y$.

The following result has been recently proved in \cite{NCPV}.

\begin{theorem} \label{quansal}

 Let $\eaxp$ and $\fbyp$ be two families of positive operators
 in $\bh$ 
 such that

 \begin{itemize}
  
 \item[(i)] $\suma\eax=1 \quad (\forall) x\in A$
 
 \item[(ii)] $\sumb\fby=1 \quad (\forall) y\in B$

\item[(iii)] $\eax\fby=\fby\eax \quad (\forall) a,b,x,y.$

\end{itemize}

Let $\rho\in\c1$ be positive  with $tr(\rho)=1$
and let 
 $$\pabxy=<\rho,\eax\fby>.$$

Suppose there exist a family $\{\sax\}\sb{a,x}$ of positive trace-class operators pn $H$ such that 

\begin{itemize} 

\item[(iv)] $\sigma=\suma\sax$ does not depend on $x\in A$ and

\item[(v)] $<\sax,\fby>=\pabxy$
for all $a,b,x,y$.
\end{itemize}

Then there exist families 
$\teaxp$ and $\tfbyp$ 
of positive operators on $H$ 
 and a normal state $\tilde{\rho}$ on $B(H\otimes H)$
 such that

 $$\suma\teax=1 \quad (\forall) x\in A$$
 and
 $$\sumb\tfby=1 \quad (\forall)y\in B$$ 
  and such that 
$$\pabxy=<\tilde{\rho},\teax\otimes\tfby>\quad (\forall) a,b,x,y.$$

\end{theorem}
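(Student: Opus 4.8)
The plan is to construct the required spatial (tensor-product) model directly out of the families $\{\sigma^x_a\}$ and $\{F^y_b\}$ by purifying $\sigma$. The operators $E^x_a$ and the state $\rho$ will play no role beyond guaranteeing that $p(a,b|x,y)$ is a probability distribution in $(a,b)$: by (i)--(iii) one has $\sum_{a,b}E^x_aF^y_b=\bigl(\sum_aE^x_a\bigr)\bigl(\sum_bF^y_b\bigr)=1$, hence $\sum_{a,b}p(a,b|x,y)=tr(\rho)=1$. Summing (v) over $b$ and using (ii) gives $tr(\sigma^x_a)=\sum_bp(a,b|x,y)$; summing over $a$ and using (iv) gives $tr(\sigma)=1$. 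Thus $\sigma$ is a density operator.

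Next I would reduce to the case in which $\sigma$ is faithful. Since $0\le\sigma^x_a\le\sum_{a'}\sigma^x_{a'}=\sigma$, one has $\ker\sigma\subseteq\ker\sigma^x_a$, so $\sigma^x_a=P\sigma^x_aP$, where $P$ is the support projection of $\sigma$. Compressing to $H_0:=PH$ leaves each $\sigma^x_a$ unchanged, replaces each $F^y_b$ by $PF^y_bP|_{H_0}$ — still a POVM on $H_0$, since $\sum_bPF^y_bP=P$ — and does not change $p(a,b|x,y)=tr(\sigma^x_aF^y_b)$. Moreover a spatial model on $H_0\otimes H_0$ lifts to one on $H\otimes H$: extend each $\tilde{E}^x_a$ and $\tilde{F}^y_b$ by $0$ off $H_0$ and add $1-P$ to one outcome in each context, so as to restore $\sum_a\tilde{E}^x_a=\sum_b\tilde{F}^y_b=1$, and view the purification vector inside $H_0\otimes H_0\subset H\otimes H$; the added pieces contribute nothing, since that vector is annihilated by $1-P$ on either leg. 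So from now on assume $\sigma$ faithful.

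Then $0\le\sigma^x_a\le\sigma$ produces, by Douglas' lemma (equivalently, via the bounded sesquilinear form $\langle\sigma^x_a\,\cdot\,,\cdot\rangle$ transported through $\sigma^{1/2}$), a unique $M^x_a\in B(H)$ with $0\le M^x_a\le 1$ and $\sigma^{1/2}M^x_a\sigma^{1/2}=\sigma^x_a$; uniqueness together with (iv) forces $\sum_aM^x_a=1$, so $\{M^x_a\}_a$ is a POVM for each $x$. Fix an orthonormal basis $\{e_i\}$ of $H$ with $\sigma e_i=\lambda_i e_i$, put $\Omega:=\sum_i\sqrt{\lambda_i}\,e_i\otimes e_i$ — a unit vector, since $\sum_i\lambda_i=1$ — and let $T\mapsto T^{t}$ denote the transpose in this basis. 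The key identity, obtained by a direct matrix computation on finite truncations of $\sigma$ and passed to the limit via a trace-norm approximation of $\sigma^{1/2}$, is
$$\langle\Omega,(X\otimes Y)\Omega\rangle=tr\bigl(\sigma^{1/2}X\sigma^{1/2}\,Y^{t}\bigr)\qquad(X,Y\in B(H)).$$
Taking $\tilde{E}^x_a:=M^x_a$, $\tilde{F}^y_b:=(F^y_b)^{t}$, and $\tilde{\rho}$ the (normal) vector state $T\mapsto\langle\Omega,T\Omega\rangle$ on $B(H\otimes H)$, and using $(Y^{t})^{t}=Y$, the identity yields
$$\langle\tilde{\rho},\tilde{E}^x_a\otimes\tilde{F}^y_b\rangle=tr\bigl(\sigma^{1/2}M^x_a\sigma^{1/2}F^y_b\bigr)=tr(\sigma^x_aF^y_b)=\langle\sigma^x_a,F^y_b\rangle=p(a,b|x,y).$$
Finally $\{\tilde{E}^x_a\}_a$ is a POVM by the above, and $\{\tilde{F}^y_b\}_b$ is one because $T\mapsto T^{t}$ is positive, unital and additive, whence $\sum_b(F^y_b)^{t}=\bigl(\sum_bF^y_b\bigr)^{t}=1$.

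I do not expect a deep obstacle. The whole construction is essentially forced once one notices that the hypothesis ``$\sum_a\sigma^x_a$ is independent of $x$'' says precisely that the $\sigma^x_a$ arise from honest POVMs $M^x_a$ conjugated by $\sigma^{1/2}$, after which purifying $\sigma$ turns the one-sided expectation $tr(\sigma^x_aF^y_b)$ into the genuine tensor expectation $\langle\Omega,(M^x_a\otimes(F^y_b)^{t})\Omega\rangle$. The two points that do require care are the reduction to faithful $\sigma$ — the support-projection bookkeeping and the padding of the POVMs, so that nothing outside $H_0\otimes H_0$ interferes — and getting the transpose correctly placed in the displayed Hilbert--Schmidt identity, along with its convergence when $H$ is infinite dimensional.
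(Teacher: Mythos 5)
Your argument is correct. One thing to note up front: the paper itself gives no proof of this theorem --- it is quoted as a result of Navascues, Cooney, Perez-Garcia and Villanueva (the paper's contribution is only the Proposition that manufactures the operators $\sigma^x_a$ from a positive idempotent), so there is no in-paper proof to compare against; your write-up is in effect a proof of the cited ingredient. What you give is the standard Schr\"odinger/HJW steering argument, and every step checks out: the normalization $tr(\sigma)=1$ via (i), (ii) and (v); the reduction to faithful $\sigma$ with the support projection $P$, where the padding of one outcome by $1-P$ is harmless because $((1-P)\otimes 1)\Omega=0=(1\otimes(1-P))\Omega$; the Douglas factorization $\sigma^x_a=\sigma^{1/2}M^x_a\sigma^{1/2}$ with $0\le M^x_a\le 1$, where faithfulness of $\sigma$ (dense range of $\sigma^{1/2}$) gives uniqueness and hence $\sum_a M^x_a=1$ from (iv); and the purification identity $\langle\Omega,(X\otimes Y)\Omega\rangle=tr(\sigma^{1/2}X\sigma^{1/2}Y^{t})$, which is a direct absolutely convergent computation in the eigenbasis (the matrix entries come with weights $\sqrt{\lambda_i\lambda_j}$, summable since $\sigma^{1/2}$ is Hilbert--Schmidt). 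Placing the transpose on Bob's side, $\tilde F^y_b=(F^y_b)^{t}$, correctly yields $\langle\Omega,(M^x_a\otimes(F^y_b)^{t})\Omega\rangle=tr(\sigma^x_aF^y_b)=p(a,b|x,y)$, and the transposed family is again a POVM since transposition is positive, linear and unital. Note also, as you observe, that the commutation hypothesis (iii) and the operators $E^x_a$ play no role beyond normalization, which is consistent with the way the cited theorem is used in the paper.
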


This result is related to a certain problem in the theory of quantum correlations 
formulated in \cite{T1} and \cite{T2}. For recent work in this area we refer to \cite{F},
\cite{J},\cite{NCPV},\cite{SW} and the references therein. 
In this paper we shall provide a class of examples where this theorem applies.

\begin{proposition} \label{main}

Suppose $\eaxp$ and $\fbyp$ are families of positive operators on $H$ 
satisfying (i)-(iii) in Theorem \ref{quansal} and let 
 $\rho\in\c1$ be positive  with $tr(\rho)=1$.

Assume there exists a  positive linear  weak star continuous idempotent map 

$$\fil:\bh\to\bh$$  such that

$$span(\fbyp)\subset range(\Phi)\subset comm(\eaxp)$$

Then there exist  a family $\{\sax\}\sb{a,x}$ of positive trace-class operators
 on $H$ such that (iv) and (v) in Theorem \ref{quansal} hold true.

\end{proposition}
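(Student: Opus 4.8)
The plan is to exhibit the family explicitly and verify conditions (iv)--(v) by hand. I would set
$$\sax:=\fis\bigl(\eaxh\,\rho\,\eaxh\bigr)$$
for each pair $(a,x)$, where $\eaxh$ denotes the positive square root of $\eax$. Sandwiching $\rho$ between two square roots (rather than forming $\eax\rho$) keeps positivity visible, and this particular choice works because $\eaxh$ lies in the C$^*$-algebra generated by $\eax$, hence commutes with every operator commuting with $\eax$ --- in particular with each $\fby$ and, decisively, with every element of $range(\fil)$. Two preliminary remarks: since $\rho\ge 0$ lies in $\c1$, the operator $\eaxh\rho\eaxh$ is positive and trace-class, and since $\fil$ is positive and weak star continuous its predual $\fis$ maps $\c1$ into $\c1$ and is positive (because $<\fis(\mu),T>=<\mu,\fil(T)>\ge 0$ whenever $\mu,T\ge 0$), so each $\sax$ is a positive trace-class operator as required; and since $\fby\in span(\fbyp)\subset range(\fil)$ with $\fil$ idempotent, $\fil(\fby)=\fby$.

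For (v) I would compute, using $\fil(\fby)=\fby$ and the predual identity $<\fis(\mu),T>=<\mu,\fil(T)>$,
$$<\sax,\fby>=<\eaxh\rho\eaxh,\fby>=tr\bigl(\eaxh\rho\eaxh\fby\bigr).$$
Since (iii) gives $\fby\eaxh=\eaxh\fby$, a cyclic rotation of the trace brings the two square roots together and collapses them: $tr(\eaxh\rho\eaxh\fby)=tr(\eax\rho\fby)=tr(\rho\,\eax\fby)=\pabxy$.

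For (iv), which I expect to be the crux, fix an arbitrary $T\in\bh$. Since $\fil(T)\in range(\fil)\subset comm(\eaxp)$, the operator $\fil(T)$ commutes with $\eax$ and hence with $\eaxh$, so
$$<\sax,T>=<\eaxh\rho\eaxh,\fil(T)>=tr\bigl(\rho\,\eaxh\fil(T)\eaxh\bigr)=tr\bigl(\rho\,\eax\fil(T)\bigr);$$
summing over $a$ and using (i),
$$\suma<\sax,T>=tr\Bigl(\rho\,\bigl(\suma\eax\bigr)\fil(T)\Bigr)=tr\bigl(\rho\,\fil(T)\bigr)=<\fis(\rho),T>.$$
As $T\in\bh$ was arbitrary, $\suma\sax=\fis(\rho)$ for every $x$, so (iv) holds with $\sigma=\fis(\rho)$.

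The one genuinely delicate point --- and the only place where the hypothesis $range(\fil)\subset comm(\eaxp)$ is used in an essential way --- is the equality $tr(\rho\,\eaxh\fil(T)\eaxh)=tr(\rho\,\eax\fil(T))$ (and its counterpart in the proof of (v)). It needs $\fil(T)$ to commute with $\eaxh=f(\eax)$ for $f(t)=\sqrt t$, which I would justify by noting that on the compact spectrum of $\eax$ the function $f$ is a uniform limit of polynomials vanishing at the origin, so $\eaxh$ belongs to the norm-closed algebra generated by $\eax$ and therefore commutes with whatever commutes with $\eax$. Everything else is routine manipulation of the trace pairing and the predual relation.
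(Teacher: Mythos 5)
Your proposal is correct and follows essentially the same route as the paper: you define $\sax=\fis(\eaxh\rho\eaxh)$ exactly as the author does, use $\fil(\fby)=\fby$ (idempotence plus $span(\fbyp)\subset range(\fil)$) for (v), and use $range(\fil)\subset comm(\eaxp)$ together with (i) to collapse $\suma\eaxh\fil(T)\eaxh$ to $\fil(T)$ for (iv). The only difference is that you spell out the routine justifications (positivity of $\fis$, the square root commuting with everything that commutes with $\eax$) that the paper leaves implicit.
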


\begin{proof}

 Let us define 
$$\sax=\fis(\eaxh\rho\eaxh)$$
for all $a,x$. 
Then for every $T\in\bh$ we have
$$<\suma\sax,T>=<\rho,\suma\eaxh\fil(T)\eaxh>
 =<\rho,\fil(T)>=<\fis(\rho),T>$$
therefore $\sigma=\suma\sax$
 does not depend  on $x$.
Moreover for every $a,x,b,y$ we have 
$$<\sax, \fby>=<\rho, \eaxh\fil(\fby)\eaxh>=<\rho,\eax\fby>=\pabxy.$$

\end{proof}

This   proof is in part inspired by the proof of Thm 5 in \cite{NCPV}.
Recall that a purely atomic von Neumann algebra is one in which the identity is a sum of minimal projections. Obviously, every finite dimensional von Neumann algebra is purely atomic
as well as $\bh$ itself. It is known  that any such algebra is the range of a weak star continuous completely positive idempotent.  It follows that the above result  applies for instance when either 
$\eaxp$ or $\fbyp$ generate  purely atomic von Neumann algebras.
For terminology and results on this class of algebras we refer to \cite{B}.

 \end{document}